\documentclass{amsart}
\usepackage[margin=1in]{geometry}
\usepackage{amsmath,amssymb,amsthm,mathtools}
\usepackage[T1]{fontenc}
\usepackage{lmodern,microtype}
\usepackage[colorlinks=true,linkcolor=blue,citecolor=blue,urlcolor=blue]{hyperref}

\newcommand{\R}{\mathbb R}
\newcommand{\HH}{\mathcal H}
\newcommand{\Vis}{\operatorname{Vis}}
\newcommand{\dimH}{\dim_{\mathrm H}}

\newcommand{\calP}{\mathcal P}

\newcommand{\proj}{\pi}

\theoremstyle{plain}
\newtheorem{theorem}{Theorem}[section]
\newtheorem{lemma}[theorem]{Lemma}
\newtheorem{proposition}[theorem]{Proposition}
\theoremstyle{remark}

\title{Visibility problem in higher dimensions}
\author{Yuming He}
\address{Institute of Applied Physics and Computational Mathematics, Beijing, 100088, PR China}
\email{heyuming25@gscaep.ac.cn}
\author{Changbiao Jian}
\address{School of Mathematics and Statistics, Guangxi Normal University, Guilin, Guangxi 541004, PR China}
\email{cbjian@mailbox.gxnu.edu.cn}
\author{Yudong Liang}
\address{School of Mathematical Sciences, Beijing Normal University, Beijing 100875, PR China}
\email{202321130093@mail.bnu.edu.cn}

\begin{document}
\maketitle

\begin{abstract}
We show that an almost-everywhere upper bound $b\geq 1$ for the Hausdorff
dimension of visible parts of all compact planar sets implies the upper bound
$d-2+b$ for visible parts of all compact sets $K$ in $\R^d$, $d\geq3$. Inserting the
sharp planar bound $b=3/2$ of Orponen and Rutar \cite{OrponenRutar2026}  gives
$\dimH\Vis_e(K)\leq d-1/2$ for almost every $e\in S^{d-1}$.
This would answer their
Question 1.9 sharply in all dimensions. 
\end{abstract}

\section{Introduction}

For a compact set $K\subset\mathbb{R}^d$ and a direction $e\in \mathbb{S}^{d-1}$, the \emph{visible part} of $K$ along $e$ is defined as
\begin{equation}\label{def:visible}
\operatorname{Vis}_e(K)=\big\{x\in K:(x+[0,\infty)e)\cap K={x}\big\}.
\end{equation}
The visibility problem in geometry measure theory concerns upper bounds on the Hausdorff dimension of $\operatorname{Vis}_e(K)$.

In the planar case, early positive results were obtained by Järvenpää, Järvenpää, MacManus, and O’Neil \cite{zbl:1026.28002}, who established the uniform bound $\dim_H \operatorname{Vis}_e(K) \leq 1$ for all $e\in\mathbb{S}^1$ when K is a quasi-circle or a connected rotation-free self-similar set. This bound has since been verified for several classes of fractal sets, including fractal percolation and certain self-similar and self-affine sets; see \cite{zbl:1251.28007,zbl:1267.28006,zbl:1026.28002,zbmath:7541839,zbl:1479.28008}. O’Neil \cite{zbl:1152.28318} proved a strong partial result for planar continua. In addition, \cite{zbl:1119.28003} shows that for $s\in(1,2]$, any compact set $K\subset\mathbb{R}^2$ with $\mathcal{H}^s(K)<\infty$ satisfies $\mathcal{H}^s(\operatorname{Vis}_e(K))=0$ for almost every $e\in\mathbb{S}^1$.

Recently, Orponen and Rutar \cite{OrponenRutar2026} obtained a sharp upper bound. To be precise, they proved that $\dim_H \operatorname{Vis}_e(K)\leq 3/2$ for almost every $e\in\mathbb{S}^1$ and every compact set $K\subset\mathbb{R}^2$. Moreover they constructed a compact set $K_0$ whose visible part has dimension at least $3/2$ in every direction. In the same paper, they asked the following question (\cite[Question 1.9]{OrponenRutar2026}):

 whether the analogous upper bound $d-1/2$ holds in all dimensions $d\geq 3$?

In dimension $d \ge 3$, recall that Orponen \cite{Orponen2023} showed that $\dim_H \operatorname{Vis}_e(K)\leq d-1/(50d)$ for almost every $e\in\mathbb{S}^{d-1}$ and every compact set $K\subset\mathbb{R}^d$. This bound was later lowered to $d-1/6$ by Matheus and Dąbrowski \cite{Dabrowski2024}.

The main purpose of this paper is to answer the above question (\cite[Question 1.9]{OrponenRutar2026}) completely. 

\begin{theorem}\label{thm:main}
For every $d\geq2$ and every compact $K\subset\R^d$,
\begin{equation}\label{eq:main}
 \dimH\Vis_e(K)\leq\min\{\dimH K,d-\tfrac12\}
 \quad\text{for $\HH^{d-1}$-almost every $e\in S^{d-1}$.}
\end{equation}
The bound $d-1/2$ is achieved for an appropriate compact $K$ on a
 set of positive measures of directions.
\end{theorem}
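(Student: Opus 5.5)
The bound by $\dimH K$ is trivial, since $\Vis_e(K)\subset K$. The content is the bound $d-\tfrac12$, and the plan is to reduce it to the planar theorem of Orponen and Rutar \cite{OrponenRutar2026} by slicing with $2$-planes that contain the viewing direction.

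Fix $e$ and let $W\subset e^\perp$ be a $(d-2)$-dimensional subspace. For $w\in W$, put $P_w=w+\operatorname{span}(W^\perp)$, a $2$-plane containing the direction $e$. The ray $x+[0,\infty)e$ stays inside the slice $P_w$ through $x$. Hence, for each $w$,
\[\Vis_e(K)\cap P_w=\Vis_e(K\cap P_w),\]
and the right-hand side is the planar visible part, computed inside $P_w\cong\R^2$. Marstrand's slicing inequality, in the form $\dimH A\le (d-2)+\operatorname{ess\,sup}_w \dimH(A\cap P_w)$, gives
\[\dimH\Vis_e(K)\le d-2+\sup_w \dimH\Vis_e(K\cap P_w).\]
This is too crude as written, because the planar theorem holds only for a.e.\ direction, and that exceptional set depends on the slice.

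To handle this, I parametrize jointly. Fix a $3$-dimensional subspace, or more simply a $2$-plane $V$, and consider directions $e\in V\cap S^{d-1}$ together with the parallel family of slices $P_w=w+V$, $w\in V^\perp$. For each fixed $w$, the planar result applied to $K\cap(w+V)$ gives
\[\dimH\Vis_e(K\cap(w+V))\le\tfrac32\quad\text{for a.e. } e\in S(V).\]
By Fubini over $(w,e)$, for a.e.\ $e\in S(V)$ this holds for a.e.\ $w\in V^\perp$. Measurability of $(w,e)\mapsto\dimH$ of the slice visible part needs a small argument; I would use Borel-ness of $\Vis$ and standard measurability of Hausdorff content.

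A Marstrand-type upper bound,
\[\dimH A\le d-2+\operatorname{ess\,sup}_{w}\dimH(A\cap(w+V)),\]
is false in general for an essential supremum; a null set of slices can carry dimension. I therefore expect this step to be the main obstacle. What I would try first is to avoid it by running the planar argument with an explicit uniform quantity. Orponen--Rutar's proof gives, for each $\epsilon$, a bound on $\HH^{3/2+\epsilon}_\infty$-type contents or covering numbers that can be integrated. The key claim is
\[\int_{S(V)}\HH^{3/2+\epsilon}_\delta\bigl(\Vis_e(K\cap(w+V))\bigr)\,de\;\text{small}\]
uniformly in $w$. Integrating over $w$ then bounds $\HH^{d-1/2+\epsilon}$ of $\Vis_e(K)$ for a.e.\ $e$, via the Fubini inequality $\int \HH^{s}(A\cap(w+V))\,dw\lesssim \HH^{s+d-2}(A)$ read in the reverse covering direction. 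Alternatively, I would adapt the $\epsilon$-free abstract hypothesis ``planar bound $b$'' into a quantified form. This is the step most likely to need the paper's own technical lemma.

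Finally, the almost-every statement in $S(V)$ must be upgraded to $\HH^{d-1}$-a.e.\ $e\in S^{d-1}$. The set of directions where the conclusion fails is measurable. Its intersection with a.e.\ great circle $S(V)$ is null, and integrating over the Grassmannian of $2$-planes $V$ (rotation invariance) shows the failure set is $\HH^{d-1}$-null.

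Sharpness: take $K=K_0\times[0,1]^{d-2}$, where $K_0$ is the Orponen--Rutar planar example. For $e$ in the plane $\R^2\times\{0\}$, we have $\Vis_e(K)=\Vis_e(K_0)\times[0,1]^{d-2}$, of dimension at least $3/2+d-2$, but that set of directions is null. I would instead use a stability/perturbation argument: for $e$ near that plane, write $e=(e',e'')$. Then
\[\Vis_e(K)\supset \Vis_{e'/|e'|}(K_0)\times([0,1]^{d-2}\setminus\text{boundary strip}),\]
because the ray leaves the cube only after it has already left the $K_0$-shadow. One has to check the ray geometry carefully here, but this gives dimension $\ge d-\tfrac12$ on an open set of $e$.
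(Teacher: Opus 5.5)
Your slicing identity, the Fubini step giving planar bounds for a.e.\ $(w,e)$, and the sharpness example are fine. In fact $\Vis_v(K_0)\times[0,1]^{d-2}\subset\Vis_e(K)$ for every $e=(e',e'')$ with $e'\neq0$ and $v=e'/|e'|$, with no boundary strip removed, because $a+te'\notin K_0$ for all $t>0$.

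The gap is the step you flag yourself, and your workaround does not close it. Your ``key claim'' is automatic and adds nothing. For each fixed $w$, the planar theorem applied to the compact set $K\cap(w+V)$ gives $\HH^{3/2+\epsilon}\bigl(\Vis_e(K\cap(w+V))\bigr)=0$, hence $\HH^{3/2+\epsilon}_\delta(\cdot)=0$, for a.e.\ $e\in S(V)$. So your integral vanishes for every $w$, and no quantitative input from Orponen--Rutar can help.

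The whole burden is therefore on passing from slices back to $\Vis_e(K)$. For a single parallel family $\{w+V\}_{w\in V^\perp}$ that passage is false. The inequality $\int\HH^s(A\cap(w+V))\,dw\lesssim\HH^{s+d-2}(A)$ turns a cover of $A$ into covers of slices. It cannot be read in reverse, because covers of individual slices, which may vary arbitrarily with $w$, do not assemble into a cover of $A$. Two examples show this:
\begin{itemize}
\item $N\times D$, with $N\subset V^\perp$ Lebesgue-null of dimension $d-2$ and $D\subset V$ a disc, has dimension $d$ but a.e.\ empty slices.
\item Even a bound on \emph{every} slice is useless: a Brownian graph in $\R^2$ has dimension $3/2$ with singleton vertical slices.
\end{itemize}
So the $\sup$-inequality in your first paragraph, attributed to Marstrand, fails too.

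The missing idea is to fix $e$ and let the slicing plane \emph{rotate} through all of $\calP_e=\{P\in G(d,2):e\in P\}$. Slicing is then used only in its valid direction (large total dimension forces many large slices), inside a contradiction argument. The paper proceeds as follows.
\begin{itemize}
\item The flag identity \eqref{eq:flag}, which is what your final averaging paragraph is reaching for, gives for a.e.\ $e$ that $\dimH(\Vis_e(K)\cap(y+P))\le 3/2$ for $\gamma_e$-a.e.\ $P$ and a.e.\ $y\in P^\perp$.
\item Suppose $\dimH\Vis_e(K)>d-\tfrac12$. Take a compact $F\subset\Vis_e(K)$ carrying a $q$-Frostman measure with $q>d-\tfrac12$.
\item Proposition~\ref{lem:slice} then gives, for $\gamma_e$-a.e.\ $P$, a positive-measure set of $y\in P^\perp$ with $\dimH(F\cap(y+P))\ge t>3/2$. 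This contradicts the first point.
\end{itemize}
That proposition is a Marstrand-type slicing theorem for the restricted family $\calP_e$, obtained from Mattila's criterion (Lemma~\ref{thm:mattila2024}). Its $L^2$ hypothesis is checked by projecting $\nu$ to $e^\perp$, where it becomes $(s-1)$-Frostman with $s-1>d-2$ and so has finite $(d-2)$-energy. One then applies Lemma~\ref{thm:mattila1995} in $e^\perp\cong\R^{d-1}$. This is also where $b\ge1$ is used, via $q>d-1$. Rotating the plane about $e$ supplies exactly the transversality that a single parallel family lacks.
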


Theorem 1.1 follows from the planar results of Orponen and Rutar \cite{OrponenRutar2026} and the following upper bound transfer, which is the new contribution of this paper. Indeed, we establish the following dimension-reduction theorem that reduces the high-dimensional visibility problem to the planar setting.

\begin{theorem}\label{thm:transfer}
Let $1\leq b\leq2$, and suppose that for every compact $A\subset\R^2$ one
has $\dimH\Vis_v(A)\leq b$ for $\HH^1$-almost every $v\in S^1$.
Then, for every $d\geq3$ and every compact $K\subset\R^d$,
\begin{equation}\label{eq:transfer}
 \dimH\Vis_e(K)\leq\min\{\dimH K,d-2+b\}
 \quad\text{for $\HH^{d-1}$-almost every $e\in S^{d-1}$.}
\end{equation}
\end{theorem}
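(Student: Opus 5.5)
The bound $\dimH\Vis_e(K)\le\dimH K$ is trivial, since $\Vis_e(K)\subset K$. So the content is the bound $d-2+b$, and the plan is to slice $\R^d$ into $2$-planes containing the direction $e$ and apply the planar hypothesis in each slice.

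\textbf{Slicing structure.} Fix a direction $e$ and a $(d-2)$-dimensional subspace $W\subset e^\perp$. Write $V=W^\perp$, which is a $2$-plane containing $e$. For $w\in W$ consider the affine plane $P_w=w+V$. Visibility along $e$ is a property of the line $x+\R e$, and this line lies inside $P_{\pi_W x}$. Hence
\begin{equation*}
\Vis_e(K)\cap P_w=\Vis_e(K\cap P_w).
\end{equation*}
Identifying $P_w$ with $\R^2$, the right-hand side is the planar visible part of the compact set $K\cap P_w$ along $e\in S(V)$.

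\textbf{Parametrization and Fubini.} I would parametrize directions as follows. Fix $W$ in a reference position and let $e$ range over the unit circle $S(W^\perp)$. The hypothesis is applied to each compact slice $K\cap P_w$ separately, with $w\in W$. For each $w$ the exceptional set of directions in $S(W^\perp)$ is $\HH^1$-null. Fubini in $(w,v)$ then shows that for a.e.\ $v\in S(W^\perp)$, almost every slice satisfies $\dimH\Vis_v(K\cap P_w)\le b$. Measurability of the exceptional set needs some care; I would handle it by using Borel or analytic structure together with capacity-type formulations. Finally, rotate $W$ over the Grassmannian. The map sending a pair ($W$, $v\in S(W^\perp)$) to $v\in S^{d-1}$ pushes the natural measure forward to a measure equivalent to $\HH^{d-1}$. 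Therefore "for a.e.\ $W$, for a.e.\ $v$" becomes "for $\HH^{d-1}$-a.e.\ $e$".

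\textbf{Main obstacle.} The key difficulty is the step "almost every slice has dimension $\le b$" $\Rightarrow$ "$\dimH\Vis_e(K)\le d-2+b$". Marstrand slicing goes the wrong way for upper bounds: a set whose slices have small dimension for a.e.\ $w$ can still have large dimension, because the bad slices form a null set yet can carry dimension up to $d-2+2$. To get around this, I would use a Frostman measure argument in the flexible direction $W$. Suppose $\dimH\Vis_e(K)>s>d-2+b$ for $e$ in a set $E$ of positive measure. Choose Frostman measures $\mu_e$ on $\Vis_e(K)$ with $s$-dimensional decay, measurably in $e$. By Marstrand-type slicing applied to $\mu_e$ with projection $\pi_W$ for a \emph{generic} $W\subset e^\perp$, there is a $\pi_W\mu_e$-positive set of $w$ whose sliced measures have dimension $\ge s-(d-2)>b$. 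The slicing theorem is valid for a.e.\ $W$ in the Grassmannian of $e^\perp$.

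This yields positive-measure families of triples $(e,W,w)$ with $\dimH\Vis_e(K\cap P_w)>b$. The remaining step is to contradict the planar hypothesis. The difficulty is that these $w$ are not Lebesgue-typical but $\pi_W\mu_e$-typical, and the hypothesis only gives a null set of directions for each fixed slice. I would resolve this by reversing the order of quantifiers. For a fixed $2$-plane $P$, the bad directions within $S(P)$ are null. The union over all planes of (plane, bad direction) is then null in the flag-type bundle of pairs (plane $P$, $v\in S(P)$), with respect to its natural measure, which has more room than $\HH^{d-1}$. The task is then to show that the triples produced above project to a positive-measure subset of this bundle. I would try to do this via a transversality or averaging argument: let $e$ vary in the positive-measure set $E$ while each $\mu_e$-typical slice remains good. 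This is the delicate step, since the slice planes depend on $\mu_e$; it may require an extra discretized, $\delta$-scale version of the planar hypothesis.
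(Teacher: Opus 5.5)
Your slicing identity and your Fubini step agree with the paper, but the argument is not closed. The step you call the main obstacle is left open, and the repair you sketch (varying $e$ over a positive-measure set with measurably chosen $\mu_e$, transversality, a $\delta$-discretized planar hypothesis) is vague and unnecessary. Your Fubini paragraph already does the quantifier reversal. It has to be phrased as a disintegration over $e$: this is the paper's flag identity, whose fibre measure is the invariant measure on $(d-2)$-planes $W\subset e^\perp$. Equivalence of the pushforward with $\HH^{d-1}$ alone is not enough. So phrased, it gives a full-measure set of $e$ such that, for a.e.\ $W\subset e^\perp$ and \emph{Lebesgue}-a.e.\ $w\in W$, $\dimH(\Vis_e(K)\cap P_w)\le b$. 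It therefore suffices to fix one such $e$ with $\dimH\Vis_e(K)>d-2+b$ and reach a contradiction.

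The missing idea is absolute continuity of the restricted projections. Take a compact $F\subset\Vis_e(K)$ carrying an $s$-Frostman probability measure $\mu$ with $s>d-2+b\ge d-1$. This is exactly where $b\ge1$ is used, and your proposal never invokes that hypothesis. Then $(\proj_{e^\perp})_\#\mu$ is $(s-1)$-Frostman on $e^\perp\cong\R^{d-1}$ with $s-1>d-2$, so it has finite $(d-2)$-energy. Since $\proj_W=\proj_W\circ\proj_{e^\perp}$ for $W\subset e^\perp$, the projection theorem inside $e^\perp$ (Lemma~\ref{thm:mattila1995}) gives $(\proj_W)_\#\mu\ll\mathcal L^{d-2}$ for a.e.\ $W$. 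Hence every $(\proj_W)_\#\mu$-positive set of $w$ is Lebesgue-positive. Your ``$\mu$-typical versus Lebesgue-typical'' issue disappears, and the contradiction with the Fubini conclusion is immediate for that single $e$.

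A second unproved claim is that Marstrand slicing ``is valid for a.e.\ $W$ in the Grassmannian of $e^\perp$.'' The planes $W\subset e^\perp$ form a null subfamily of $G(d,d-2)$, so the classical slicing theorem does not apply. The paper obtains this restricted slicing, together with Lebesgue-positivity of the set of good fibres, from Mattila's general slicing theorem (Lemma~\ref{thm:mattila2024}). It applies that theorem to the family $Q_R=\proj_{P_0^\perp}\circ R^{-1}$, with $R$ ranging over the orthogonal maps fixing $e$. The averaged $L^2$ hypothesis is checked by the same two-step argument: first the Frostman bound for $\proj_{e^\perp}\nu$, then Lemma~\ref{thm:mattila1995} inside $e^\perp$.

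Alternatively, you can bound the sliced energy directly. A dyadic tube count shows that
\[
\iint|x-y|^{-t}\,|\proj_{e^\perp}(x-y)|^{-(d-2)}\,d\mu(x)\,d\mu(y)
\]
is finite whenever $s>d-1$ and $t<s-(d-2)$. Either way, $s>d-1$ drives both the slicing and the absolute continuity, and no discretized form of the planar hypothesis is needed.
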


The proof of Theorem 1.2 depends on the following key proposition.

\begin{proposition}\label{lem:slice}
	Let $F\subset\R^d$ be compact, and let $\mu$ be a probability
	measure on $F$ satisfying
	\begin{equation}\label{eq:frostman}
		\mu(B(x,r))\leq C_\mu r^q
		\qquad(x\in\R^d,\ r>0)
	\end{equation}
	for some $q>m+1$.
Then for every $0<t<q-m$, for $\gamma_e$-almost every $P\in\calP_e$
	there is a set $Y_P\subset P^\perp$ of positive $\HH^m$ measure
	such that
	\begin{equation}\label{eq:slices}
		\dimH(F\cap(y+P))\geq t
		\qquad(y\in Y_P).
	\end{equation}
\end{proposition}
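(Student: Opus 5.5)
We read the statement, as its use in Theorem~\ref{thm:transfer} suggests, with $m=d-2$. Here $\calP_e$ is the family of $2$-dimensional subspaces $P\ni e$, and $\gamma_e$ is the natural rotation-invariant probability measure on it. Equivalently, $P\mapsto P^\perp$ identifies $\calP_e$ with the Grassmannian of $m$-planes inside $e^\perp\cong\R^{d-1}$, with its Haar measure. If the paper's $m$ is a general parameter, the argument below runs verbatim with $P^\perp$ ranging over $m$-planes in $e^\perp$.

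The plan is Mattila's energy proof of the Marstrand slicing theorem, restricted to this family. For each $P$ and $y\in P^\perp$ we define the sliced measures
\[
\mu_{P,y}=\lim_{\delta\to0}(2\delta)^{-m}\,\mu|_{\{x:|\proj_{P^\perp}x-y|<\delta\}} .
\]
The limit exists weakly for $\HH^m$-a.e.\ $y$ whenever $\proj_{P^\perp}\mu\ll\HH^m$, and then $\mu_{P,y}$ is supported in $F\cap(y+P)$. The goal is the averaged energy bound
\[
\int_{\calP_e}\int_{P^\perp} I_t(\mu_{P,y})\,d\HH^m(y)\,d\gamma_e(P)<\infty ,
\]
together with $\int\|\mu_{P,y}\|\,d\HH^m(y)=1$ for $\gamma_e$-a.e.\ $P$. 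Given these two facts, for a.e.\ $P$ the set $Y_P=\{y:0<\|\mu_{P,y}\|,\ I_t(\mu_{P,y})<\infty\}$ has positive $\HH^m$ measure. On $Y_P$ the slice $F\cap(y+P)$ carries a nonzero measure of finite $t$-energy, hence has dimension $\geq t$, which is \eqref{eq:slices}.

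Absolute continuity and the total-mass identity will follow from the same computation at $t=0$, using the $L^2$ bound $\int\!\int\|\proj_{P^\perp}\mu\|_{L^2}^2\,d\gamma_e<\infty$. By Fatou's lemma, expanding the $\delta$-approximants, the energy bound reduces to
\[
\iint |x-z|^{-t}\,\sup_{\delta>0}\delta^{-m}\gamma_e\{P:|\proj_{P^\perp}(x-z)|<\delta\}\,d\mu(x)\,d\mu(z)<\infty .
\]
To bound the inner supremum, write $w=x-z=w_e e+w'$ with $w'\in e^\perp$. Since $e\in P$, we have $\proj_{P^\perp}w=\proj_{P^\perp}w'$. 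The standard Grassmannian estimate in $\R^{d-1}$ then gives $\gamma_e\{P:|\proj_{P^\perp}w'|<\delta\}\lesssim(\delta/|w'|)^m$. So it suffices to show that
\[
\sup_x\int|x-z|^{-t}\,|(x-z)'|^{-m}\,d\mu(z)<\infty .
\]

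This last bound is the main obstacle: the family $\calP_e$ is not transversal in the direction $e$, so pairs with $x-z$ nearly parallel to $e$ must be controlled by the Frostman condition alone. The plan is to decompose dyadically, with $|x-z|\sim r$ and $|(x-z)'|\sim\rho\leq r$. The corresponding region is contained in a cylinder of radius $\rho$ and length $\sim r$ around the line $x+\R e$. It is covered by $\lesssim r/\rho$ balls of radius $\rho$, so by \eqref{eq:frostman} its $\mu$-measure is $\lesssim r\rho^{q-1}$. The resulting sum is
\[
\sum_r\sum_{\rho\leq r}r^{1-t}\rho^{q-1-m}.
\]
The inner sum converges to $\lesssim r^{q-m}$ because $q>m+1$. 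What remains is $\sum_{r\lesssim\operatorname{diam}F}r^{q-m-t}<\infty$, which converges because $t<q-m$. These are exactly the two hypotheses of the proposition.

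A secondary technical point is making the slice measures rigorous: existence of the weak limits, and the identity $\int\|\mu_{P,y}\|\,d\HH^m(y)=\mu(F)=1$. For this I would cite the standard theory of slicing measures (Mattila's book, Ch.~10), applied in the ambient space with the fixed projection $\proj_{P^\perp}$.
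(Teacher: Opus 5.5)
Your proposal is correct, but it takes a different route from the paper.

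\textbf{The paper's route.} The paper never constructs slice measures by hand. It picks $s$ with $\max\{m+1,m+t\}<s<q$ and extracts a compact $A\subset F$ with $0<\HH^s(A)<\infty$. It then applies Mattila's 2024 slicing theorem for restricted projection families (Lemma~\ref{thm:mattila2024}), with $\Lambda$ the stabilizer of $e$ in $O(d)$. The only thing it checks is the uniform $L^2$ hypothesis. For this it observes that $\eta=(\proj_H)_\#\nu$ is $(s-1)$-Frostman, which is the same cylinder-covering estimate you use. Since $s>m+1$, this gives $I_m(\eta)\lesssim\nu(\R^d)$, and Mattila's classical $L^2$ projection theorem inside $H$ finishes the check. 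Because $I_m(\eta)=\iint|(x-z)'|^{-m}\,d\nu\,d\nu$, that verification is exactly the $t=0$ case of your kernel estimate.

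\textbf{Your route.} You push the same estimate to the anisotropic kernel $|x-z|^{-t}|(x-z)'|^{-m}$. You then run Mattila's classical energy proof of the Marstrand slicing theorem directly on $\mu$, using the restricted Grassmannian bound $\gamma_e\{P:|\proj_{P^\perp}w|<\delta\}\lesssim(\delta/|w'|)^m$. Localizing the cylinder to length $\sim r$ is what produces the extra factor $r$ that absorbs $|x-z|^{-t}$. The inner dyadic sum then needs exactly $q>m+1$ and the outer sum exactly $t<q-m$, so both hypotheses enter transparently.

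\textbf{Comparison.} Your argument is self-contained: it needs only the classical slicing-measure machinery, and it works with the Frostman measure itself, without passing to a set of finite positive $\HH^s$-measure. The paper's argument is shorter on the page and gives the sharper information $\dimH(A\cap(y+P))=s-m$ for the slices of $A$. The price is reliance on a recent and heavier theorem, whose internal proof essentially carries out the energy argument you sketch.

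One cosmetic point: if you normalize by $(2\delta)^{-m}$ while using round $\delta$-balls in $P^\perp$, the total-mass identity reads $\int\|\mu_{P,y}\|\,d\HH^m(y)=\alpha(m)2^{-m}$, where $\alpha(m)$ is the volume of the unit ball in $\R^m$. This differs from $1$ when $m\geq2$, but it is harmless, since you only need positivity.
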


For the proof of the key proposition, see scction 2. For the details of the proof of Theorem 1.1 and 1.2, see Section 3. 
\subsection*{Acknowledgements}
The authors would like to thank Changxing Miao for his helpful discussions.

\section{Proof of Proposition 1.3}
In this section, we recall several lemmas and prove our key proposition. 

Let $\gamma_{n,k}$ denote the invariant probability measure on $\operatorname{G}(n,k)$.
Let $d\geq3$. we set $m=d-2$, and fix $e\in S^{d-1}$.
Write $H=e^\perp$ and define
$$
\calP_e=\{P\in G(d,2):e\in P\}.
$$
Let $V$ be a subspace of $\mathbb{R}^d$. We denote the orthogonal projection onto $V$ by $\proj_V$.
Define $\gamma_e$ as the pushforward of the invariant probability measure on $G(H,1)\cong G(d-1,1)$ under $\ell\mapsto\R e\oplus\ell$.
We require the following two lemmas established by Mattila.
\begin{lemma}[{\cite[Theorem~9.7]{Mattila1995}}]
	\label{thm:mattila1995}
	Let $1\leq k<d$, and let $\nu$ be a compactly supported
	Radon measure on $\R^d$ with
	$$
	I_k(\nu):=\iint |x-x'|^{-k}\,d\nu(x)\,d\nu(x')<\infty.
	$$
	Then, for $\gamma_{d,k}$-almost every $V\in\operatorname{G}(d,k)$,
	the measure $(\proj_V)_\#\nu$ is absolutely continuous
	with respect to Lebesgue measure on $V$, with density
	$f_V\in L^2(V)$, and
	$$
	\int_{\operatorname{G}(d,k)}\|f_V\|_{L^2(V)}^2\,d\gamma_{d,k}(V)
	\leq C_{d,k}I_k(\nu).
	$$
	Here $C_{d,k}$ is a constant depending only on $d$ and $k$.
\end{lemma}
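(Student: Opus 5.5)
We follow the classical route in Mattila's book, which avoids the Fourier transform and uses only a geometric estimate on the Grassmannian together with differentiation of measures. For $V\in\operatorname{G}(d,k)$ write $\nu_V=(\proj_V)_\#\nu$. For $u\in V$ define the lower derivative
$$
\underline D(\nu_V,u)=\liminf_{\delta\to0}\frac{\nu_V(B(u,\delta))}{\alpha(k)\delta^k},
$$
where $\alpha(k)$ is the volume of the unit ball in $\R^k$. The plan has three steps. First, bound $\int_{\operatorname{G}(d,k)}\int \underline D(\nu_V,u)\,d\nu_V(u)\,d\gamma_{d,k}(V)$ by $C\,I_k(\nu)$. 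Second, deduce absolute continuity of $\nu_V$ for $\gamma_{d,k}$-almost every $V$. Third, identify $\int \underline D\,d\nu_V$ with $\|f_V\|_{L^2(V)}^2$.

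\textbf{Key geometric estimate.} For $z\in\R^d\setminus\{0\}$ and $\delta>0$,
$$
\gamma_{d,k}\{V:|\proj_V z|<\delta\}\leq C_{d,k}\,\delta^k|z|^{-k}.
$$
We would prove this by rotation invariance. It reduces to $z$ a unit vector, where the quantity equals $\sigma^{d-1}\{w\in S^{d-1}:|\proj_{V_0}w|<\delta\}$ for a fixed $V_0$. This is the measure of a $\delta$-neighbourhood of the great sphere $S^{d-1}\cap V_0^\perp$ in the $k$ directions transversal to it, hence $\lesssim\delta^k$. Of the three steps, this computation, although standard, is the only genuinely geometric input.

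\textbf{Integration via Fatou.} We have
$$
\nu_V(B(\proj_V x,\delta))=\nu\{y:|\proj_V(x-y)|<\delta\}.
$$
Fatou's lemma and Fubini then give
$$
\int\!\!\int \underline D(\nu_V,u)\,d\nu_V\,d\gamma_{d,k}
\leq\liminf_{\delta\to0}\frac{1}{\alpha(k)\delta^k}\iint\gamma_{d,k}\{V:|\proj_V(x-y)|<\delta\}\,d\nu(x)\,d\nu(y).
$$
By the geometric estimate the right-hand side is at most $C_{d,k}\,I_k(\nu)$. Since $I_k(\nu)<\infty$, the diagonal $\{x=y\}$ is $\nu\times\nu$-null, so no issue arises there.

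\textbf{Absolute continuity and the $L^2$ identity.} By the previous step, for $\gamma_{d,k}$-almost every $V$ we have $\int\underline D(\nu_V,\cdot)\,d\nu_V<\infty$. In particular $\underline D(\nu_V,u)<\infty$ for $\nu_V$-almost every $u$. By the standard density theorem comparing a Radon measure with $\HH^k$ on $V$, this forces $\nu_V\ll\HH^k|_V$. The density is then $f_V=\underline D(\nu_V,\cdot)$ almost everywhere, by the Lebesgue differentiation theorem. Consequently
$$
\int \underline D(\nu_V,u)\,d\nu_V(u)=\int_V f_V^2\,d\HH^k=\|f_V\|_{L^2(V)}^2.
$$
Combining this with the Fatou bound gives the stated inequality. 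The main care in this step is the measure-theoretic fact that finite lower density $\nu_V$-almost everywhere implies absolute continuity. We would cite this from the differentiation theory of measures in Mattila's book rather than reprove it.
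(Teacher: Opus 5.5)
Your proposal is correct, and it is essentially the proof of Mattila's Theorem~9.7, which the paper cites without reproducing. That proof has three ingredients. The first is the Grassmannian estimate $\gamma_{d,k}\{V:|\proj_V z|<\delta\}\lesssim\delta^k|z|^{-k}$. The second is Fatou plus Fubini, which bound $\int\!\!\int\underline D(\nu_V,\cdot)\,d\nu_V\,d\gamma_{d,k}$ by $C_{d,k}I_k(\nu)$, using that the diagonal is $\nu\times\nu$-null. The third is the differentiation theorem for Radon measures, which gives $\nu_V\ll\HH^k|_V$ and identifies $\int\underline D(\nu_V,\cdot)\,d\nu_V$ with $\|f_V\|_{L^2(V)}^2$. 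The points you leave implicit (Fatou through a continuous-parameter $\liminf$, joint measurability in $(x,V)$) are routine and are handled as in the book.
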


\begin{lemma}[{\cite[Theorem~1.2]{Mattila2024}}]
	\label{thm:mattila2024}
	Let $1\leq k<d$, let $\Lambda$ be a compact metric space,
	and let $\omega$ be a finite nonzero Borel measure on $\Lambda$.
	Let $Q_\lambda:\R^d\to\R^k$ be orthogonal projections such that
	$\lambda\mapsto Q_\lambda x$ is continuous for every $x\in\R^d$.
	Fix $s>k$ and $p>1$.
	Assume that there exists a constant $C>0$ such that whenever $\nu$ is a finite nonzero Borel measure compactly supported on $B^d(0,1)$ satisfying $\nu(B(x,r))\le r^s$ for all $r>0$ and $x\in \R^d$, then $(Q_{\lambda})_\#\nu \ll \mathcal{L}^k$ for $\omega$-almost every $\lambda\in\Lambda$, and its density $f_{\lambda, \nu}$ satisfies
\begin{equation}\label{eq:mattila-hypothesis}
\int_{\Lambda} \| f_{\lambda, \nu} \|_{L^p(\R^k)}^p d\omega(\lambda) \le C \nu(\R^d).
\end{equation}
And
suppose $A\subset \R^d$ is $\mathcal{H}^s$-measurable and $0<\mathcal{H}^s(A)<\infty$. Then for $\mathcal{H}^s\times\omega$-almost every $(x, \lambda)\in A\times \Lambda$, 
\begin{equation}
\dim_H (A\cap Q_{\lambda}^{-1}\{ Q_{\lambda}x \})=s-k,
\end{equation}
and for $\omega$-almost every $\lambda\in\Lambda$, 
\begin{equation}
\mathcal{L}^k( \{ u\in \R^k:\ \dim_H (A\cap Q_{\lambda}^{-1}\{ u \})=s-k \} )>0.
\end{equation}
\end{lemma}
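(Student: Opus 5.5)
Since the statement is Mattila's slicing theorem under an abstract $L^p$ projection hypothesis, I would prove the two inequalities $\dimH(A\cap Q_\lambda^{-1}\{Q_\lambda x\})\le s-k$ and $\ge s-k$ separately. I would then deduce the "positive $\mathcal L^k$ measure of good fibres" statement from the almost-everywhere statement in $(x,\lambda)$ by projecting.

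\emph{Preliminary reduction.} By regularity of $\HH^s$ and the upper density bound $\limsup_{r\to0}\HH^s(A\cap B(x,r))/(2r)^s\le 1$ for $\HH^s$-a.e.\ $x\in A$, I would exhaust $A$ up to an $\HH^s$-null set by compact pieces $A'$. Each piece lies, after rescaling, in $B^d(0,1)$, and $\nu=c\,\HH^s|_{A'}$ satisfies $\nu(B(x,r))\le r^s$ for all $x,r$ (Egorov plus a small constant $c$). Restrictions of $\nu$ to Borel subsets still satisfy the Frostman condition. Hence the hypothesis \eqref{eq:mattila-hypothesis} gives $(Q_\lambda)_\#(\nu|_E)\ll\mathcal L^k$ for $\omega$-a.e.\ $\lambda$, for every Borel $E$. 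Using countably many $E$, this yields: if $N\subset\R^k$ is $\mathcal L^k$-null then $\nu(Q_\lambda^{-1}N)=0$ for $\omega$-a.e.\ $\lambda$. This is the device that converts "for a.e.\ $u$" into "for $\nu$-a.e.\ $x$".

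\emph{Upper bound.} The Eilenberg (coarea-type) inequality gives
\[
\int^*\HH^{s-k}(A\cap Q_\lambda^{-1}\{u\})\,d\mathcal L^k(u)\le C\,\HH^s(A)<\infty
\]
for every $\lambda$. Hence $\HH^{s-k}(A\cap Q_\lambda^{-1}\{u\})<\infty$ for $\mathcal L^k$-a.e.\ $u$. By the preliminary step, this holds for $u=Q_\lambda x$ for $\nu$-a.e.\ $x$, so the slice through $x$ has dimension $\le s-k$. Fubini in $(x,\lambda)$ then gives the $\HH^s\times\omega$-a.e.\ upper bound.

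\emph{Lower bound (main step).} Fix $t<s-k$. I would disintegrate $\nu$ with respect to $Q_\lambda$ into sliced measures $\nu_{\lambda,u}$ on $Q_\lambda^{-1}\{u\}$, defined for a.e.\ $u$ as the weak limits of $(2\delta)^{-k}\nu|_{\{|Q_\lambda-u|<\delta\}}$. The aim is to show that for $\nu$-a.e.\ $x$ the slice measure through $x$ satisfies a Frostman-type bound $\nu_{\lambda,Q_\lambda x}(B(x,r))\lesssim r^{t}$ for small $r$, together with $\nu_{\lambda,Q_\lambda x}\ne0$; the latter holds since $f_{\lambda,\nu}(Q_\lambda x)>0$ for $\nu$-a.e.\ $x$. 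The mass distribution principle then gives dimension $\ge t$.

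To get the bound, I would apply the hypothesis at every scale. For a dyadic $r=2^{-j}$ and a point $x$, let $\nu^{x,j}$ be $\nu|_{B(x,r)}$ pushed forward by the dilation $z\mapsto (z-x)/r$ and multiplied by $r^{-s}$. This is again Frostman with exponent $s$, supported in $B^d(0,1)$, and has mass $\le1$. By \eqref{eq:mattila-hypothesis}, the densities satisfy $\int\|f_{\lambda,\nu^{x,j}}\|_p^p\,d\omega\le C$, and undoing the scaling,
\[
\nu_{\lambda,u}(B(x,r))=r^{s-k}f_{\lambda,\nu^{x,j}}\big((u-Q_\lambda x)/r\big).
\]
The difficulty is that we need this density evaluated at the specific point $0$ (the case $u=Q_\lambda x$), whereas the hypothesis only controls $L^p$ norms. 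I would handle this by averaging over $x$ as well. Replacing $B(x,r)$ by a fixed grid cube $D\ni x$ of side $r$, I would integrate $\nu_{\lambda,Q_\lambda x}(D)^{\,p-1}$ against $\nu|_D(x)$. Writing $\nu|_D=r^s\,(\text{rescaled }\nu^D)$, this integral becomes $r^{s}\cdot r^{(s-k)(p-1)}\int f_{\lambda,\nu^D}^{\,p}$. Summing over the cubes $D$ at level $j$ and integrating in $\omega$ gives
\[
\iint\sum_{D\ni x}\nu_{\lambda,Q_\lambda x}(D)^{p-1}\,d\nu(x)\,d\omega\lesssim r^{(s-k)(p-1)}\cdot\#\{D\}\,r^{s}.
\]
Here $\#\{D\}\,r^s\lesssim\HH^s(A')$-type bounds hold by Frostman and density. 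Chebyshev and Borel–Cantelli over $j$ with threshold $r^{t}$, $t<s-k$, then give $\nu_{\lambda,Q_\lambda x}(D_j(x))\le r^{t}$ for all large $j$, for $\nu\times\omega$-a.e.\ $(x,\lambda)$. Passing from grid cubes to balls costs only a bounded number of neighbouring cubes.

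\emph{Conclusion.} The last assertion follows by setting $G_\lambda=\{x:\text{slice dimension}=s-k\}$, which has full $\nu$-measure for a.e.\ $\lambda$. Its image $Q_\lambda(G_\lambda)$ carries the measure $(Q_\lambda)_\#\nu\ll\mathcal L^k$, which is nonzero, so the image has positive $\mathcal L^k$ measure; measurability can be arranged by passing to a Borel subset.

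I expect the lower bound to be the main obstacle: turning the averaged $L^p$ control into pointwise slice Frostman bounds at the fibre through $x$, uniformly across scales. The measurability of the sliced measures in $(x,\lambda)$ also needs care, using continuity of $\lambda\mapsto Q_\lambda x$ and compactness of $\Lambda$.
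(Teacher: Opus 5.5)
The paper does not prove this statement. It is quoted from Mattila's Theorem~1.2 and used as a black box in the proof of Proposition~1.3, so there is no in-paper argument to compare with. Your outline is the natural route for deriving slicing from projection estimates: Eilenberg's inequality for the upper bound, and for the lower bound the hypothesis applied to rescaled restrictions of a Frostman measure, read through sliced measures, followed by Chebyshev and Borel--Cantelli. Its core is right. By the disintegration $\nu=\int\nu_{\lambda,u}\,du$ one has $\int_D\nu_{\lambda,Q_\lambda x}(D)^{p-1}\,d\nu(x)=\int\nu_{\lambda,u}(D)^p\,du=\|f_{\lambda,\nu|_D}\|_{L^p}^p$, and your scaling computation is correct.

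One step is false as written. You bound $\int\|f_{\lambda,\nu^D}\|_p^p\,d\omega$ by $C$ using only $\nu^D(\R^d)\le 1$, which leaves $r^{(s-k)(p-1)}\cdot\#\{D\}\,r^s$, and then assert $\#\{D\}\,r^s\lesssim\HH^s(A')$. Neither the Frostman condition nor the density theorems give this:
\begin{itemize}
\item $\nu(D)\lesssim r^s$ bounds the number of cubes charged by $\nu$ from \emph{below}, not above;
\item $\HH^s(A')<\infty$ does not control covering numbers, since the upper box dimension of $A'$, or even of $\operatorname{spt}\nu$, can exceed $s$.
\end{itemize}
So your series in $j$ need not converge. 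The fix is to keep the mass. Since $\nu^D(\R^d)\approx r^{-s}\nu(D)$, the hypothesis gives $\int\|f_{\lambda,\nu|_D}\|_{L^p}^p\,d\omega\lesssim r^{(s-k)(p-1)}\nu(D)$. Summing over $D$ then yields $\lesssim r^{(s-k)(p-1)}\nu(\R^d)$ with no counting at all; this is exactly why the hypothesis has $C\nu(\R^d)$ on the right.

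Two smaller points need tightening.
\begin{itemize}
\item Your Borel--Cantelli conclusion controls only $D_j(x)$, not its neighbours, so passing to balls via ``neighbouring cubes'' is not justified pointwise. Either run the integral estimate with the $3^d$-fold enlarged cubes (bounded overlap), or use the dyadic (net measure) form of the mass distribution principle, which needs no balls.
\item A bound on $\nu_{\lambda,Q_\lambda x}$ near the single point $x$ does not by itself give $\dimH\geq t$ for the slice. You need it at $\nu_{\lambda,u}$-almost every point of the slice. This follows from the same disintegration for a.e.\ $u$, and hence for $u=Q_\lambda x$ with $\nu$-a.e.\ $x$, because $(Q_\lambda)_\#\nu\ll\mathcal L^k$.
\end{itemize}
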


With these lemmas in hand, we now establish the Proposition 1.3, which is central to the proof of our main theorem and may be of independent interest. Let us recall it here.
\begin{proposition}
	Let $F\subset\R^d$ be compact, and let $\mu$ be a probability
	measure on $F$ satisfying
	\begin{equation}\label{eq:frostman}
		\mu(B(x,r))\leq C_\mu r^q
		\qquad(x\in\R^d,\ r>0)
	\end{equation}
	for some $q>m+1$.
Then for every $0<t<q-m$, for $\gamma_e$-almost every $P\in\calP_e$
	there is a set $Y_P\subset P^\perp$ of positive $\HH^m$ measure
	such that
	\begin{equation}\label{eq:slices}
		\dimH(F\cap(y+P))\geq t
		\qquad(y\in Y_P).
	\end{equation}
\end{proposition}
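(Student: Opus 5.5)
The plan is to deduce the proposition from Lemma~\ref{thm:mattila2024}, applied to the family of projections whose fibres are exactly the planes $y+P$, $P\in\calP_e$, with Lemma~\ref{thm:mattila1995} supplying the $L^2$ hypothesis \eqref{eq:mattila-hypothesis} with $p=2$.

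\textbf{Parametrising the slices.} For $P=\R e\oplus\ell$ with $\ell\in G(H,1)$, the orthogonal complement is $P^\perp=\ell^{\perp_H}$, the orthogonal complement of $\ell$ inside $H=e^\perp$. This is an $m$-dimensional subspace of $H$, and
$$\proj_{P^\perp}=\proj_{W}\circ\proj_H, \qquad W=\ell^{\perp_H}\in G(H,m).$$
The fibres of $\proj_{P^\perp}$ are the planes $y+P$ with $y\in P^\perp$. The map $\ell\mapsto\ell^{\perp_H}$ pushes the invariant measure on $G(H,1)$ to the invariant measure on $G(H,m)\cong G(d-1,m)$. So I take $\Lambda=G(H,m)$ with $\omega$ this invariant measure, and set $Q_W=\proj_W\circ\proj_H$, identified with a map to $\R^m$. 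Then $\lambda\mapsto Q_\lambda x$ is continuous, and "$\omega$-a.e.\ $W$" is the same as "$\gamma_e$-a.e.\ $P$".

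\textbf{Verifying \eqref{eq:mattila-hypothesis} for $s>m+1$.} Let $\nu$ be supported in $B^d(0,1)$ with $\nu(B(x,r))\le r^s$. The preimage $\proj_H^{-1}(B_H(z,r))\cap B(0,1)$ is a cylinder of radius $r$ and length at most $2$. It is covered by about $r^{-1}$ balls of radius $2r$, so
$$\proj_{H\#}\nu(B(z,r))\lesssim r^{s-1}.$$
Since $s-1>m$ and the support has diameter at most $2$, we get $\int|z-z'|^{-m}\,d\proj_{H\#}\nu(z')\le C(s,m)$ uniformly in $z$. Hence $I_m(\proj_{H\#}\nu)\le C\,\nu(\R^d)$. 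Applying Lemma~\ref{thm:mattila1995} inside $H\cong\R^{d-1}$ with $k=m$, so that $1\le m<d-1$, gives for $\omega$-a.e.\ $W$ that $(Q_W)_\#\nu\ll\mathcal L^m$ with density $f_W$, and
$$\int\|f_W\|_2^2\,d\omega\le C\,\nu(\R^d).$$
This is exactly \eqref{eq:mattila-hypothesis} with $p=2$. This step is the heart of the argument and is where the assumption $q>m+1$ is used: Frostman bounds do not survive projection in general, but projecting along the single direction $e$ costs exactly one dimension.

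\textbf{Conclusion.} Given $0<t<q-m$, choose $s$ with $\max\{t+m,\,m+1\}<s<q$. By rescaling and translating, which preserves dimensions and the family of planes, we may assume $F\subset B(0,1)$. The measure $\mu$ and the mass distribution principle give $\dimH F\ge q>s$, so there is a compact $A\subset F$ with $0<\HH^s(A)<\infty$; by Frostman/Howroyd-type results such an $A$ exists. Lemma~\ref{thm:mattila2024} then yields, for $\omega$-a.e.\ $W$, a set of $u\in\R^m$ of positive $\mathcal L^m$ measure with
$$\dimH\bigl(A\cap Q_W^{-1}\{u\}\bigr)=s-m\ge t.$$
Transport this set to $Y_P\subset P^\perp$ via the isometry $W\cong\R^m$. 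Since $A\cap Q_W^{-1}\{u\}\subset F\cap(y+P)$, this gives \eqref{eq:slices}.

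The only delicate points beyond the projection estimate are bookkeeping. First, check that the normalisation "$\nu(B(x,r))\le r^s$, supported in $B(0,1)$" in Lemma~\ref{thm:mattila2024} is met after rescaling. Second, check that the identification of $\gamma_e$ with $\omega$ is measure-preserving.
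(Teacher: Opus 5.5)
Your argument follows the paper's proof almost step for step. You make the same choice $\max\{m+1,m+t\}<s<q$ and use the same cylinder covering to get $\proj_{H\#}\nu(B_H(z,r))\lesssim r^{s-1}$, hence $I_m(\proj_{H\#}\nu)\le C\nu(\R^d)$. You then apply Lemma~\ref{thm:mattila1995} inside $H$ and extract a compact $A\subset F$ with $0<\HH^s(A)<\infty$, as the paper does. The step that fails as written is the parametrisation. You take $\Lambda=G(H,m)$ and $Q_W=\proj_W\circ\proj_H$ ``identified with a map to $\R^m$'', and you assert that $W\mapsto Q_Wx$ is continuous. Lemma~\ref{thm:mattila2024} needs maps into one fixed $\R^k$ that are continuous in $\lambda$ for every $x$. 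Evaluating such a family at the basis vectors of $H$ would give an orthonormal basis of $W$ depending continuously on $W\in G(H,m)$, i.e.\ a global trivialisation of the tautological bundle over $G(H,m)\cong G(d-1,d-2)$. That bundle is non-orientable, so no such identification exists. Already for $d=3$ ($m=1$, $H\cong\R^2$) it would be a continuous choice of unit vector $v(W)\in W$ on lines through the origin. Rotating $W$ through angle $\pi$ returns to the same line with $v$ replaced by $-v$. So the lemma cannot be invoked with your $\Lambda$ as stated.

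The repair is exactly the device the paper uses. Lift to the stabiliser $\Lambda=\{R\in O(d):Re=e\}$ with Haar probability $\omega$, fix $P_0\in\calP_e$, identify $P_0^\perp\cong\R^m$ once, and set $Q_R=\proj_{P_0^\perp}\circ R^{-1}$. Then the following hold:
\begin{itemize}
\item $R\mapsto Q_Rx$ is continuous and $\ker Q_R=RP_0$.
\item $Q_R$ maps $(RP_0)^\perp$ isometrically onto $P_0^\perp$, so the $L^2$ norms of the projected densities are unchanged.
\item $R\mapsto RP_0$ pushes $\omega$ to $\gamma_e$.
\end{itemize}
Your $L^2$ estimate therefore transfers verbatim, and $Q_R^{-1}\{u\}=Ru+RP_0$ recovers the slices $y+P$. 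Two alternatives also work: parametrise by orthonormal $m$-frames in $H$, or cover $G(H,m)$ by finitely many compact pieces carrying continuous frames and apply the lemma to each restricted measure. With this change your proof coincides with the paper's. Your rescaling to $F\subset B(0,1)$ is harmless but unnecessary, since the normalisation in Lemma~\ref{thm:mattila2024} concerns only the test measures $\nu$.
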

\begin{proof}
	Fix $0<t<q-m$ and choose $s$ such that
	\begin{equation}\label{eq:slicing-exponent}
		\max\{m+1,m+t\}<s<q.
	\end{equation}
	The Frostman's condition implies $\dimH F\geq q>s$.
	By \cite[Theorem~8.13]{Mattila1995}, there is a compact
	$A\subset F$ with $0<\HH^s(A)<\infty$.
	We shall apply Lemma~\ref{thm:mattila2024} to $A$ with
	$k=m$ and $p=2$.
	
	Fix $P_0\in\calP_e$ and identify $P_0^\perp$ isometrically
	with $\R^m$. Let
	$$
	\Lambda=\{R\in O(d):Re=e\},
	\qquad
	Q_R=\proj_{P_0^\perp}\circ R^{-1},
	$$
	and equip the compact group $\Lambda$ with Haar probability
	measure $\omega$.
	The map $R\mapsto Q_Rx$ is continuous for every $x$.
	Moreover,
	$$
	\ker Q_R=RP_0,
	\qquad
	Q_R=R^{-1}\circ\proj_{(RP_0)^\perp},
	$$
	so $Q_R$ restricts to an isometry from $(RP_0)^\perp$
	onto $P_0^\perp$.
	The group $\Lambda$ acts transitively on $\calP_e$;
	by invariance, the pushforward of $\omega$ under
	$R\mapsto RP_0$ is $\gamma_e$.
	
	We next verify \eqref{eq:mattila-hypothesis}.
	Let $\nu$ be any nonzero finite Borel measure compactly
	supported in $B(0,1)$ with
	$\nu(B(x,r))\leq r^s$ for all $x$ and $r>0$, and put
	$\eta=(\proj_H)_\#\nu$.
	For $z\in H$ and $0<r\leq1$, the set
	$$
	B(0,1)\cap\proj_H^{-1}(B_H(z,r))
	$$
	lies in a cylinder parallel to $e$ of length at most $2$
	and radius $r$.
Notice that there are at most $3/r$ balls with radius $2r$ that cover this cylinder.
	Consequently,
	\begin{equation}\label{eq:projected-frostman}
		\eta(B_H(z,r))
		\leq \frac{3}{r}(2r)^s
		=3\cdot2^s r^{s-1}.
	\end{equation}
	In particular, $\eta$ has no atoms, i.e., $\eta(z)=0$ for all $z\in H$.
	Also, $\eta(H)=\nu(\R^d)\leq1$.
	For every $z\in H$, dyadic decomposition therefore gives
	$$
	\begin{aligned}
		\int_H|z-z'|^{-m}\,d\eta(z')
		&\leq \eta(H)
		+\sum_{j=0}^\infty
		2^{m(j+1)}\eta(B_H(z,2^{-j}))\\
		&\leq 1+3\cdot2^{s+m}
		\sum_{j=0}^\infty2^{-j(s-1-m)}
		\leq C_{d,s}.
	\end{aligned}
	$$
	The series converges because $s>m+1$.
	Integrating this pointwise bound with respect to $\eta$ yields
	\begin{equation}\label{eq:projected-energy}
		I_m(\eta)\leq C_{d,s}\eta(H)
		=C_{d,s}\nu(\R^d),
	\end{equation}
	with a constant independent of $\nu$ and $e$.
	
	Apply Lemma~\ref{thm:mattila1995} to $\eta$ in
	$H\cong\R^{d-1}$.
	Since $P\mapsto P^\perp$ sends $\gamma_e$ to the invariant
	probability measure on $G(H,m)$, and
	$$
	(\proj_{P^\perp})_\#\nu
	=(\proj_{P^\perp})_\#\eta,
	$$
	that theorem gives absolutely continuous projections with
	$L^2$ densities for $\gamma_e$-almost every $P$.
	Using the isometry $R^{-1}:(RP_0)^\perp\to P_0^\perp$,
	we obtain
	$$
	\begin{aligned}
		\int_\Lambda
		\|Q_{R\#}\nu\|_{L^2(P_0^\perp)}^2\,d\omega(R)
		&=
		\int_{\calP_e}
		\|(\proj_{P^\perp})_\#\nu\|_{L^2(P^\perp)}^2
		\,d\gamma_e(P)\\
		&\leq C_d I_m(\eta)
		\leq C_{d,s}\nu(\R^d).
	\end{aligned}
	$$
	Here the norms are those of the corresponding densities.
	Thus both absolute continuity and
	\eqref{eq:mattila-hypothesis} hold with $p=2$.
	
	Theorem~\ref{thm:mattila2024} now gives, for $\omega$-almost
	every $R$, a set $U_R\subset P_0^\perp$ of positive
	Lebesgue measure such that
	$$
	\dimH\bigl(A\cap Q_R^{-1}\{u\}\bigr)=s-m
	\qquad(u\in U_R).
	$$
	Note that for each $P=RP_0$,
	$$
	Q_R^{-1}\{u\}=Ru+P,
	$$
	 $u\mapsto Ru$ is an isometry from $P_0^\perp$
	onto $P^\perp$.
	It follows that
	$$
	\mathcal L^m_{P^\perp}
	\{y\in P^\perp:\dimH(A\cap(y+P))=s-m\}>0.
	$$
	This property depends only on $P$.
	Since $R\mapsto RP_0$ has pushforward $\gamma_e$, it holds
	for $\gamma_e$-almost every $P$.
	For each such $P$, take
	$$
	Y_P=\{y\in P^\perp:\dimH(A\cap(y+P))=s-m\}.
	$$
	As $A\subset F$ and $s-m>t$, every $y\in Y_P$ satisfies
	$$
	\dimH(F\cap(y+P))
	\geq \dimH(A\cap(y+P))
	=s-m>t.
	$$
	This completes the proof.
\end{proof}

\section{Proofs of Theorems 1.1 and 1.2}

In this section, we prove our main theorem. We begin by stating the precise geometric identity that connects planar visibility to the higher-dimensional problem. If $P\in\calP_e$, $y\in P^\perp$, and $x\in y+P$, then the entire
ray $x+[0,\infty)e$ lies in $y+P$. Thus we have
\begin{equation}\label{eq:identity}
 \Vis_e(K)\cap(y+P)
 =\Vis_e\bigl(K\cap(y+P)\bigr).
\end{equation}
The right-hand side is understood as planar visibility inside the affine
plane $y+P$.
\begin{proof}[Proof of Theorem~\ref{thm:transfer}]
	Fix $K$, and let $\gamma$ denote the unique $O(d)$-invariant Borel probability measure on $G(d,2)$.
	For every $P\in G(d,2)$ and every $y\in P^\perp$,
	$K\cap(y+P)$ is compact. The hypothesis and \eqref{eq:identity} therefore
	imply
	\begin{equation}\label{eq:plane-ae}
		\dimH\bigl(\Vis_e(K)\cap(y+P)\bigr)\leq b
		\quad\text{for almost every $e\in S(P)$}.
	\end{equation}
	
	Let $\sigma_P$ and $\sigma$ denote normalized surface measure on
	$S(P)$ and $S^{d-1}$, respectively. Here $S(P)$ denotes the unit circle on the $2$--plane $P$.  For every nonnegative Borel
	function $f$ on
	$\{(e,P,y):P\in G(d,2),\ e\in S(P),\ y\in P^\perp\}$, we claim the following equality holds,
	\begin{equation}\label{eq:flag}
		\begin{aligned}
			&\int_{G(d,2)}\int_{S(P)}\int_{P^\perp}
			f(e,P,y)\,dy\,d\sigma_P(e)\,d\gamma(P)\\
			&\qquad=
			\int_{S^{d-1}}\int_{\calP_e}\int_{P^\perp}
			f(e,P,y)\,dy\,d\gamma_e(P)\,d\sigma(e).
		\end{aligned}
	\end{equation}
	To verify this identity, fix $P_0\in G(d,2)$ and $e_0\in S(P_0)$,
	and let $dR$ denote normalized Haar measure on $O(d)$.
	Averaging over the orthogonal maps preserving $P_0$ or fixing
	$e_0$, respectively, and making the change of variables $y=Rz$,
	shows that both sides equal
	$$
	\int_{O(d)}\int_{P_0^\perp}
	f(Re_0,RP_0,Rz)\,dz\,dR.
	$$
Substituting
$$
f(e,P,y)
=\mathbf{1}_{\{
	\dimH(\Vis_e(K)\cap(y+P))>b
	\}}
$$
into \eqref{eq:flag},
by using \eqref{eq:plane-ae} and Tonelli's theorem, 
$$
\begin{aligned}
	0
	&=\int_{G(d,2)}\int_{P^\perp}\int_{S(P)}
	f(e,P,y)\,d\sigma_P(e)\,dy\,d\gamma(P)\\
	&=\int_{S^{d-1}}\int_{\calP_e}\int_{P^\perp}
	f(e,P,y)\,dy\,d\gamma_e(P)\,d\sigma(e).
\end{aligned}
$$
Since $f\geq0$, applying Tonelli's theorem again yields
a set $E_K\subset S^{d-1}$ with
$\sigma(S^{d-1}\setminus E_K)=0$ such that, for every $e\in E_K$,
$$
\dimH\bigl(\Vis_e(K)\cap(y+P)\bigr)\leq b
\quad\text{for $\gamma_e$-a.e. $P$ and
	Lebesgue-a.e. $y\in P^\perp$.}
$$

Fix $e\in E_K$ and suppose, for a contradiction, that
$\dimH\Vis_e(K)>m+b$. Choose $q,t$ such that
\begin{equation}\label{eq:exponents}
	m+b<q<\dimH\Vis_e(K),\qquad b<t<q-m.
\end{equation}
Since $b\geq1$, we have $q>m+1$.
By compact approximation of the Hausdorff dimension of Borel
sets and Frostman's lemma, there exist a compact set
$F\subset\Vis_e(K)$ and a $q$-Frostman probability measure
$\mu$ supported on $F$.
Then Proposition~\ref{lem:slice} yields a positive measure set of $y\in P^\perp$ such that
$$
\dimH(F\cap(y+P))\geq t>b
$$
for $\gamma_e$-almost every
$P\in\calP_e$.
Choose such a $P$ for which the preceding almost-everywhere
upper bound also holds.
Since $F\subset\Vis_e(K)$, that upper bound implies
$\dimH(F\cap(y+P))\leq b$ for Lebesgue-almost every
$y\in P^\perp$, a contradiction.
We conclude that $\dimH\Vis_e(K)\leq m+b$ for every $e\in E_K$.
\end{proof}

Now applying Theorem \ref{thm:transfer} immediately yields Theorem \ref{thm:main}.
\begin{proof}[Proof of Theorem~\ref{thm:main}]
For $d=2$ this is the upper half of
\cite[Theorem~A]{OrponenRutar2026}; for $d\geq3$, apply
Theorem~\ref{thm:transfer} with $b=3/2$.
The sharpness follows from a known construction in \cite[Section~1.2]{OrponenRutar2026}. We include it here for completeness.  
Define 
$$\operatorname{Vis}^{(2)}_e(K)=\big\{x\in K:(x+\mathbb{R}e)\cap K={x}\big\}.$$
Take the planar compact set $A$ in the lower half of \cite[Theorem~A]{OrponenRutar2026}, with $\dimH\Vis^{(2)}_v(A)\geq3/2$ for every direction $v$ in
an open angular range. Let $K=A\times[0,1]^{d-2}$. For any direction
$e=(v,w)$ in the corresponding open set of higher-dimensional directions,
$$
 \Vis^{(2)}_v(A)\times[0,1]^{d-2}
 \subset\Vis^{(2)}_e(K)\subset\Vis_e(K).
$$
Indeed, a second intersection of the line through $(a,z)$ in direction
$(v,w)$ with $K$ would project to a second intersection of the line
through $a$ in direction $v$ with $A$. The product dimension is at least
$3/2+d-2=d-1/2$. 
\end{proof}

\end{document}